\documentclass[12pt,twoside]{amsart}
\usepackage{amsmath, amsthm, amscd, amsfonts, amssymb, graphicx}

\usepackage{enumerate}
\usepackage[colorlinks=true,
linkcolor=blue,
urlcolor=cyan,
citecolor=red]{hyperref}
\usepackage{mathrsfs}
\addtolength{\topmargin}{-1.5cm}
\linespread {1.3}
\textwidth 17cm
\textheight 23cm
\addtolength{\hoffset}{-0.3cm}
\oddsidemargin 0cm
\evensidemargin 0cm
\setcounter{page}{1}
\newtheorem{theorem}{Theorem}[section]

\newtheorem{corollary}{Corollary}[section]

\numberwithin{equation}{section}

\begin{document}
	
\title{matrix Fej\'{e}r and Levin-Ste\v{c}kin inequalities}
\author{Mohammad Sababheh$^{1}$, Shiva Sheybani$^{2}$ and Hamid Reza Moradi$^{3}$}
\subjclass[2010]{Primary 47A63, Secondary 47B15, 15A45, 47A30, 15A60.}
\keywords{Levin-Ste\v{c}kin inequality, Fe{j}\'er inequality, positive matrices.}

\begin{abstract}
Fe{j}\'er and Levin-Ste\v{c}kin inequalities treat integrals of the product of convex functions with symmetric functions. The main goal of this article is to present possible matrix versions of these inequalities. In particular, majorization results are shown of Fej\'{e}r type for both convex and log-convex functions. For matrix Levin-Ste\v{c}kin type, we present  more rigorous results involving the partial Loewner ordering for Hermitian matrices.
\end{abstract}
\maketitle
\pagestyle{myheadings}
\markboth{\centerline {}}
{\centerline {}}
\bigskip
\bigskip
\section{Introduction}
The theory of Convex functions has played a major role in the study of Mathematical inequalities. Related to convex-type inequalities, 
the  Levin-Ste\v{c}kin's inequality states that if the function $p:\left[ 0,1 \right]\to \mathbb{R}$ is symmetric about $t=\frac{1}{2}$, namely $p\left( 1-t \right)=p\left( t \right),$  and non-decreasing  on $\left[ 0,\frac{1}{2} \right]$, then for every convex function $f$ on $[0,1]$, the inequality
\begin{equation}\label{3}
\int\limits_{0}^{1}{p\left( t \right)f\left( t \right)dt}\le \int\limits_{0}^{1}{p\left( t \right)dt}\int\limits_{0}^{1}{f\left( t \right)dt}
\end{equation}
holds true \cite{levin}.
If $p$ is symmetric non-negative (without any knowledge about its monotonicity) and $f$ is convex, Fej\'er inequality states that \cite{fejer}
\[\int\limits_{0}^{1}{p\left( t \right)dt}f\left( \frac{a+b}{2} \right)\le \int\limits_{0}^{1}{p\left( t \right)f\left( \left( 1-t \right)a+tb \right)dt}\le \int\limits_{0}^{1}{p\left( t \right)dt}\frac{f\left( a \right)+f\left( b \right)}{2}.\]
We notice that Fej\'er inequality reduces to the Hermite-Hadamard inequality \cite{hadamard}, when $p(t)=1.$
In the field of Mathematical inequalities, it is of interest to extend known inequalities from the setting of scalars to other objects; such as matrices. In this article, we will be interested in extending both the Levin-Ste\v{c}kin and Fej\'er inequalities to the matrices setting.

In the sequel, $\mathcal{M}_n$ will denote the algebra of all $n\times n$ complex matrices. The conjugate transpose (or adjoint) of $A\in\mathcal{M}_n$ is denoted by $A^*$, and then the matrix $A$ will be called Hermitian if $A^*=A.$ When $\left<Ax,x\right>\geq 0$ for all $x\in\mathbb{C}^n$, $A$ is said to be positive semidefinite, and is denoted as $A\geq 0.$ If $A\geq 0$ and $A$ is invertible, then $A$ is said to be positive (strictly positive or positive definite). When $A,B\in\mathcal{M}_n$ are Hermitian, we say that $A\leq B$ if $B-A\geq 0.$ This provides a partial ordering on the class of Hermitian matrices. The eigenvalues of a Hermitian matrix $A$ will be denoted by $\lambda_1(A),\lambda_2(A),\cdots,\lambda_n(A)$, repeated according to their multiplicity and arranged decreasingly. That is $\lambda_1(A)\geq\lambda_2(A)\geq\cdots\geq\lambda_n(A).$ 

The relation $A\leq B$ implies $\lambda_i(A)\leq \lambda_i(B)$ for any such Hermitian matrices $A,B\in\mathcal{M}_n$. However, the converse is not true. This urges the need to discuss, in some cases, the latter order. For convenience, we will write $\lambda(A)\leq \lambda(B)$ to mean that $\lambda_i(A)\leq\lambda(B), i=1,2,\cdots,n.$

Another weaker ordering among matrices is the so called weak majorization $\prec_w$, defined for the Hermitian matrices $A,B$ as
$$A\prec_w B\Leftrightarrow \sum_{i=1}^{k}\lambda_i(A)\leq\sum_{i=1}^{k}\lambda_i(B), k=1,\cdots,n.$$
It is clear that \cite{bhatia}
$$A\leq B\Rightarrow \lambda(A)\leq \lambda(B)\Rightarrow A\prec_w B.$$
It is customary to obtain one of these orders when extending a scalar inequality to a matrix inequality. For example, in this article we obtain 
\[\lambda \left( \int\limits_{0}^{1}{p\left( t \right)dt}f\left( \frac{A+B}{2} \right) \right)\le \lambda \left( \int\limits_{0}^{1}{p\left( t \right)f\left( \left( 1-t \right)A+tB \right)dt} \right),\]
as an extension of Fe{j}\'er inequality, to the Hermitian matrices $A,B$ with spectra in the domain of $f$.

Further, if $f$ is monotone, then
\[\lambda \left( \int\limits_{0}^{1}{p\left( t \right)f\left( \left( 1-t \right)A+tB \right)dt} \right)\le \lambda \left( \int\limits_{0}^{1}{p\left( t \right)dt}\frac{f\left( A \right)+f\left( B \right)}{2} \right);\]
as matrix inequalities of the Fej\'er inequality.

In the next section we study the possible matrix versions of Fej\'er inequality, which in turns imply certain versions of the Hermite-Hadamard matrix inequality \cite{moslehian}. Then log-convex functions will be deployed to obtain new matrix Fej\'er inequalities for this type of functions, and we conclude with the discussion of the matrix Levin-Ste\v{c}kin inequality.

\section{Fej\'er matrix inequalities for convex functions}
We begin with the following weak majorization of  Fej\'er-type inequality. 
\begin{theorem}\label{thm1}
Let $f:J\to\mathbb{R}$ be convex and let $p:[0,1]\to [0,\infty)$ be symmetric about $t=\frac{1}{2}.$ If $A,B\in\mathcal{M}_n$ are Hermitian with spectra in the interval $J$, then
\[\lambda \left( \int\limits_{0}^{1}{p\left( t \right)dtf\left( \frac{A+B}{2} \right)} \right){{\prec }_{w}}\lambda \left( \int\limits_{0}^{1}{p\left( t \right)f\left( \left( 1-t \right)A+tB \right)dt} \right).\]
\end{theorem}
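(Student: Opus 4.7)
The plan is to reduce the matrix statement to a family of scalar Fej\'er inequalities, one for each vector of a carefully chosen orthonormal $k$-frame, and then repackage them into a partial sum of eigenvalues via Ky Fan's maximum principle. Write $M=\tfrac{A+B}{2}$ and $X(t)=(1-t)A+tB$, and diagonalise $M=\sum_{j=1}^{n}\mu_j u_j u_j^{*}$ in an orthonormal basis. Fix $k\in\{1,\dots,n\}$; the target is to prove
\[
\sum_{i=1}^{k}\lambda_i\Bigl(\int_0^1 p(t)\,dt\cdot f(M)\Bigr)\;\le\;\sum_{i=1}^{k}\lambda_i\Bigl(\int_0^1 p(t)f(X(t))\,dt\Bigr),
\]
from which the weak majorisation follows upon letting $k$ range over $\{1,\dots,n\}$.

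The decisive step is the choice of frame. Since the eigenvalues of $f(M)$ are precisely $\{f(\mu_j)\}_{j=1}^{n}$, I would pick indices $s_1,\dots,s_k$ so that $f(\mu_{s_1}),\dots,f(\mu_{s_k})$ constitute the $k$ largest values in $\{f(\mu_j)\}$, and set $p_i:=u_{s_i}$. These vectors are orthonormal, and by construction $\sum_{i=1}^{k}f(\langle Mp_i,p_i\rangle)=\sum_{i=1}^{k}\lambda_i(f(M))$. I expect this to be the main subtlety: because $f$ is convex but not assumed monotone, the na\"ive choice of the top $k$ eigenvectors of $M$ itself need not produce the top $k$ eigenvalues of $f(M)$, so one must reindex according to $f(\mu_j)$ rather than $\mu_j$.

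With the frame in hand, the remaining work is scalar. For each $i$ the map $t\mapsto\langle X(t)p_i,p_i\rangle=(1-t)\langle Ap_i,p_i\rangle+t\langle Bp_i,p_i\rangle$ is affine in $t$ with image in $J$, so $t\mapsto f(\langle X(t)p_i,p_i\rangle)$ is convex on $[0,1]$. The scalar Fej\'er inequality applied with endpoints $\langle Ap_i,p_i\rangle$ and $\langle Bp_i,p_i\rangle$ gives
\[
f(\langle Mp_i,p_i\rangle)\int_0^1 p(t)\,dt\;\le\;\int_0^1 p(t)\,f(\langle X(t)p_i,p_i\rangle)\,dt,
\]
while the scalar Jensen inequality, applied through the spectral decomposition of the Hermitian $X(t)$, yields $f(\langle X(t)p_i,p_i\rangle)\le\langle f(X(t))p_i,p_i\rangle$ for every $t$. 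Integrating the latter against $p(t)$, chaining with the former, and summing over $i=1,\dots,k$ gives
\[
\int_0^1 p(t)\,dt\cdot\sum_{i=1}^{k}\lambda_i(f(M))\;\le\;\sum_{i=1}^{k}\Bigl\langle\int_0^1 p(t)f(X(t))\,dt\cdot p_i,\,p_i\Bigr\rangle.
\]
The left side equals $\sum_{i=1}^{k}\lambda_i\bigl(\int_0^1 p(t)\,dt\cdot f(M)\bigr)$, while the right side is $\operatorname{tr}\bigl(P^{*}\int_0^1 p(t)f(X(t))\,dt\,P\bigr)$ for the isometry $P=[p_1\,\cdots\,p_k]$, and is therefore bounded above by $\sum_{i=1}^{k}\lambda_i\bigl(\int_0^1 p(t)f(X(t))\,dt\bigr)$ by Ky Fan's maximum principle. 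This establishes the $k$-th partial sum inequality, and since $k$ was arbitrary the weak majorisation follows.
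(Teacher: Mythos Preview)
Your argument is correct and follows essentially the same route as the paper: both proofs combine the scalar Fej\'er inequality (applied to $a=\langle Ax,x\rangle$, $b=\langle Bx,x\rangle$) with the pointwise Jensen inequality $f(\langle X(t)x,x\rangle)\le\langle f(X(t))x,x\rangle$, then evaluate on an orthonormal eigenbasis of $\tfrac{A+B}{2}$ reordered so that $f(\mu_1)\ge\cdots\ge f(\mu_n)$, and finish via Ky Fan's maximum principle. The paper leaves the Ky Fan step implicit and derives the scalar Fej\'er bound from scratch, whereas you name both ingredients explicitly, but the substance is the same.
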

 \begin{proof}
 If $f$ is a convex function, then for any $0 \le t \le 1$
\[\begin{aligned}
   f\left( \frac{a+b}{2} \right)&=f\left( \frac{\left( 1-t \right)a+tb+\left( 1-t \right)b+ta}{2} \right) \\ 
 & \le \frac{f\left( \left( 1-t \right)a+tb \right)+f\left( \left( 1-t \right)b+ta \right)}{2}.  
\end{aligned}\]
Thus,
\begin{equation}\label{1}
f\left( \frac{a+b}{2} \right)\le \frac{f\left( \left( 1-t \right)a+tb \right)+f\left( \left( 1-t \right)b+ta \right)}{2}.
\end{equation}
If the function $p:\left[ 0,1 \right]\to \mathbb{R}$ is symmetric, we get from \eqref{1},
\[p\left( t \right)f\left( \frac{a+b}{2} \right)\le p\left( t \right)\left( \frac{f\left( \left( 1-t \right)a+tb \right)+f\left( \left( 1-t \right)b+ta \right)}{2} \right).\]
Integrating on $\left[ 0,1 \right]$, we get 
\begin{equation}\label{2}
\int\limits_{0}^{1}{p\left( t \right)}dtf\left( \frac{a+b}{2} \right)\le \int\limits_{0}^{1}{p\left( t \right)f\left( \left( 1-t \right)a+tb \right)dt}.
\end{equation}
If we replace $a$, $b$ by $\left\langle Ax,x \right\rangle $, $\left\langle Bx,x \right\rangle $ respectively, in \eqref{2}, we get 
\begin{equation}\label{4}
\int\limits_{0}^{1}{p\left( t \right)dtf\left( \frac{\left\langle Ax,x \right\rangle +\left\langle Bx,x \right\rangle }{2} \right)}\le \int\limits_{0}^{1}{p\left( t \right)f\left( \left( 1-t \right)\left\langle Ax,x \right\rangle +t\left\langle Bx,x \right\rangle  \right)dt}.
\end{equation}
On the other hand, it follows from the Jensen's inequality \cite[Theorem 1.2]{mond-pecaric},
	\[f\left( \left\langle \left( \left( 1-t \right)A+tB \right)x,x \right\rangle  \right)\le \left\langle f\left( \left( 1-t \right)A+tB \right)x,x \right\rangle .\]
By multiplying both sides by $p\left( t \right)$, we get
	\[p\left( t \right)f\left( \left\langle \left( \left( 1-t \right)A+tB \right)x,x \right\rangle  \right)\le p\left( t \right)\left\langle f\left( \left( 1-t \right)A+tB \right)x,x \right\rangle .\]
Therefore,
\begin{equation}\label{5}
\int\limits_{0}^{1}{p\left( t \right)f\left( \left\langle \left( \left( 1-t \right)A+tB \right)x,x \right\rangle  \right)dt}\le \left\langle \left( \int\limits_{0}^{1}{p\left( t \right)f\left( \left( 1-t \right)A+tB \right)dt} \right)x,x \right\rangle.
\end{equation}
Combining inequalities \eqref{4} with \eqref{5}, we obtain
\begin{equation}\label{6}
\int\limits_{0}^{1}{p\left( t \right)dtf\left( \frac{\left\langle Ax,x \right\rangle +\left\langle Bx,x \right\rangle }{2} \right)}\le \left\langle \left( \int\limits_{0}^{1}{p\left( t \right)f\left( \left( 1-t \right)A+tB \right)dt} \right)x,x \right\rangle .
\end{equation}
Suppose that ${{\lambda }_{1}},\ldots ,{{\lambda }_{m}}$ are the eigenvalues of $\frac{A+B}{2}$ with ${{x}_{1}},\ldots ,{{x}_{m}}$ as an orthonormal system of corresponding eigenvectors arranged such that $f\left( {{\lambda }_{1}} \right)\ge \cdots \ge f\left( {{\lambda }_{m}} \right)$. We have
\[\begin{aligned}
   \sum\limits_{j=1}^{k}{{{\lambda }_{j}}\left( \int\limits_{0}^{1}{p\left( t \right)dtf\left( \frac{A+B}{2} \right)} \right)}&=\sum\limits_{j=1}^{k}{\int\limits_{0}^{1}{p\left( t \right)dtf\left( \left\langle \left( \frac{A+B}{2} \right){{x}_{j}},{{x}_{j}} \right\rangle  \right)}} \\ 
 & =\sum\limits_{j=1}^{k}{\int\limits_{0}^{1}{p\left( t \right)dtf\left( \frac{\left\langle A{{x}_{j}},{{x}_{j}} \right\rangle +\left\langle B{{x}_{j}},{{x}_{j}} \right\rangle }{2} \right)}} \\ 
 & \le \sum\limits_{j=1}^{k}{\left\langle \left( \int\limits_{0}^{1}{p\left( t \right)f\left( \left( 1-t \right)A+tB \right)dt} \right){{x}_{j}},{{x}_{j}} \right\rangle } \\ 
 &\qquad \text{(by the inequality \eqref{6})}\\
 & \le \sum\limits_{j=1}^{k}{{{\lambda }_{j}}\left( \int\limits_{0}^{1}{p\left( t \right)f\left( \left( 1-t \right)A+tB \right)dt} \right)}.  
\end{aligned}\]
Namely,
\[\sum\limits_{j=1}^{k}{{{\lambda }_{j}}\left( \int\limits_{0}^{1}{p\left( t \right)dtf\left( \frac{A+B}{2} \right)} \right)}\le \sum\limits_{j=1}^{k}{{{\lambda }_{j}}\left( \int\limits_{0}^{1}{p\left( t \right)f\left( \left( 1-t \right)A+tB \right)dt} \right)}.\]
Therefore,
\[\lambda \left( \int\limits_{0}^{1}{p\left( t \right)dtf\left( \frac{A+B}{2} \right)} \right){{\prec }_{w}}\lambda \left( \int\limits_{0}^{1}{p\left( t \right)f\left( \left( 1-t \right)A+tB \right)dt} \right).\]
 \end{proof}

\section{Fej\'er inequalities via log-convex functions}
In this part of the paper, we show a matrix Fej\'er inequality for log-convex functions. 
\begin{theorem}\label{thm_log}
Let $f$ be log-convex and $p:[0,1]\to (0,\infty)$ be symmetric and normalized in the sense that $\int_{0}^{1}p(t)dt=1.$ Then
\[\lambda\left(\log f\left(\frac{A+B}{2}\right)\right)\prec_w\lambda\left(\log\int_{0}^{1}p(t)f((1-t)A+tB)dt\right).\]
\end{theorem}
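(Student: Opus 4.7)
The plan is to mimic the proof of Theorem \ref{thm1}, first reducing the statement to a quadratic-form inequality, then promoting it via Ky Fan's maximum principle, and finally bridging the gap to the right-hand side of the theorem by invoking the operator concavity of the logarithm.

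Since $f$ is log-convex, the function $g:=\log f$ is convex on the domain, so the same Jensen-type inequality for convex functions of Hermitian matrices used in the proof of Theorem \ref{thm1} (applied to $g$) gives
\[\log f(\langle Cx,x\rangle)\le \langle (\log f(C))x,x\rangle\]
for every unit vector $x\in\mathbb{C}^n$ and every Hermitian $C$ with spectrum in $J$. Specializing $C=(1-t)A+tB$, multiplying by $p(t)>0$, integrating over $[0,1]$, and then applying the scalar Jensen inequality for the convex function $g$ against the probability density $p$ (the symmetry of $p$ together with $\int_0^1 p(t)\,dt=1$ yields $\int_0^1 p(t)[(1-t)\langle Ax,x\rangle+t\langle Bx,x\rangle]\,dt=\tfrac{\langle Ax,x\rangle+\langle Bx,x\rangle}{2}$), I would obtain
\[\log f\!\left(\left\langle\tfrac{A+B}{2}x,x\right\rangle\right)\le \langle Nx,x\rangle,\qquad N:=\int_0^1 p(t)\log f((1-t)A+tB)\,dt.\]

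Next, choose an orthonormal basis of eigenvectors $x_1,\dots,x_n$ of $\tfrac{A+B}{2}$, with eigenvalues $\mu_j$, ordered so that $\log f(\mu_1)\ge\cdots\ge\log f(\mu_n)$; then $\lambda_j(\log f(\tfrac{A+B}{2}))=\log f(\mu_j)$. Substituting $x=x_j$ in the previous inequality, summing for $j=1,\dots,k$, and invoking Ky Fan's maximum principle for the Hermitian matrix $N$ exactly as at the end of the proof of Theorem \ref{thm1}, one gets
\[\sum_{j=1}^k\lambda_j\!\left(\log f\!\left(\tfrac{A+B}{2}\right)\right)\le \sum_{j=1}^k\langle Nx_j,x_j\rangle\le \sum_{j=1}^k\lambda_j(N),\]
so that $\log f(\tfrac{A+B}{2})\prec_w N$.

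It remains to replace $N$ by $\log M$, where $M:=\int_0^1 p(t)f((1-t)A+tB)\,dt$. Since $f>0$, each $f((1-t)A+tB)$ is positive definite, and I would invoke the classical operator concavity of $\log$ on the positive cone: for any positive integrable family $X(t)$ and any probability density $p$,
\[\int_0^1 p(t)\log X(t)\,dt\le \log\!\int_0^1 p(t)X(t)\,dt\]
in the Loewner order. Taking $X(t)=f((1-t)A+tB)$ gives $N\le \log M$, whence $\lambda_j(N)\le\lambda_j(\log M)$ for every $j$, and therefore $N\prec_w\log M$. Transitivity of $\prec_w$ then closes the chain. The delicate point I anticipate is precisely this last step: the natural Jensen/Ky Fan chain delivers the tighter quantity $\int_0^1 p(t)\log f((1-t)A+tB)\,dt$, and the operator concavity of $\log$ is exactly the ingredient that converts it into the looser $\log\int_0^1 p(t)f((1-t)A+tB)\,dt$ appearing in the statement.
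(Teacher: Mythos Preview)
Your argument is correct, but it does not follow the paper's route. The paper never introduces the intermediate matrix $N=\int_0^1 p(t)\log f((1-t)A+tB)\,dt$ and never invokes the \emph{operator} concavity of $\log$. Instead, the authors stay at the scalar level throughout: after the scalar Fej\'er inequality for the convex function $\log f$, they apply the \emph{scalar} Jensen inequality for the concave function $\log$ against the probability measure $p(t)\,dt$ to obtain
\[
\log f\!\left(\Bigl\langle\tfrac{A+B}{2}x,x\Bigr\rangle\right)\le \log\int_0^1 p(t)\,f\bigl(\langle((1-t)A+tB)x,x\rangle\bigr)\,dt,
\]
and only then specialize $x$ to eigenvectors of $\tfrac{A+B}{2}$ and sum. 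Your approach, by contrast, first promotes to a weak majorization by $N$ via Ky Fan, and then upgrades $N\le\log M$ in the Loewner order through the operator Jensen inequality for $\log$. What your route buys is a clean final step: $N\le\log M$ immediately yields $\lambda_j(N)\le\lambda_j(\log M)$, whereas the paper's last displayed inequality---passing from $\sum_{j}\log\int p(t)f\langle((1-t)A+tB)x_j,x_j\rangle\,dt$ directly to $\sum_j\lambda_j(\log M)$---is stated without comment and is in fact delicate, since concavity of $\log$ only gives $\langle(\log M)x,x\rangle\le\log\langle Mx,x\rangle$, the wrong direction for a naive Ky Fan argument. Your detour through operator concavity is therefore the more transparent way to close the chain.
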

\begin{proof}
When $f$ is convex, we have
\[f\left(\left<\frac{A+B}{2}x,x\right>\right)\leq \int_{0}^{1}p(t)f\left<((1-t)A+tB)x,x\right>dt,\] for any unit vector $x$. Since $f$ is log-convex, it follows that
\[\log f\left(\left<\frac{A+B}{2}x,x\right>\right)\leq \int_{0}^{1}p(t)\log f\left<((1-t)A+tB)x,x\right>dt.\] Noting that $\log$ is a concave function and that $d\mu(t):=p(t)dt$ is a probability measure, we have
\begin{align*}
\log f\left(\left<\frac{A+B}{2}x,x\right>\right)&\leq \int_{0}^{1}p(t)\log f\left<((1-t)A+tB)x,x\right>dt\\
&=\int_{0}^{1}\log f\left<((1-t)A+tB)x,x\right>d\mu(t)\\
&\leq \log\int_{0}^{1} f\left<((1-t)A+tB)x,x\right>d\mu(t)\\
&=\log\int_{0}^{1} p(t)f\left<((1-t)A+tB)x,x\right>dt,
\end{align*}
for any unit vector $x$. Now let $\lambda_j$ be the eigenvalues of $\frac{A+B}{2}$ with orthonormal eigenvectors $x_1,x_2,\cdots,x_n$, so that $f(\lambda_1)\geq\cdots\geq f(\lambda_n).$ Then
\begin{align*}
\sum_{j=1}^{k}\lambda_j\left(\log f\left(\frac{A+B}{2}\right)\right)&=\sum_{j=1}^{k} \log f(\lambda_j)\\
&=\sum_{j=1}^{k}\log f\left(\left<\frac{A+B}{2}x_j,x_j\right>\right)\\
&\leq \sum_{j=1}^{k}\log\int_{0}^{1} p(t)f\left<((1-t)A+tB)x_j,x_j\right>dt\\
&\leq \sum_{j=1}^{k}\lambda_j\left(\log\int_{0}^{1}p(t)f((1-t)A+tB))dt\right).
\end{align*}
This completes the proof.
\end{proof}
 As a consequence, we have the following.
 
 \begin{corollary}
 Let $f$ be log-convex and $p:[0,1]\to (0,\infty)$ be symmetric and normalized. Then
 \[\prod_{j=1}^{k}\lambda_j\left(f\left(\frac{A+B}{2}\right)\right)\leq  \prod_{j=1}^{k}\lambda_j\left(\int_{0}^{1}p(t)f((1-t)A+tB)dt\right), k=1,\cdots,n.\]
 \end{corollary}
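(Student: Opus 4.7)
The plan is to deduce the corollary from Theorem \ref{thm_log} by a single exponentiation step applied to the weak log-majorization it provides.

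First, since $f$ is log-convex, $f>0$ on its domain; consequently the matrices $M:=f\!\left(\frac{A+B}{2}\right)$ and $N:=\int_0^1 p(t)f((1-t)A+tB)\,dt$ are positive definite (the integrand is positive for every $t$, and $p(t)>0$). This ensures that $\log M$ and $\log N$ are well-defined Hermitian matrices and that the scalar logarithm is strictly increasing on their spectra.

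The key observation is the spectral identity $\lambda_j(\log Y)=\log\lambda_j(Y)$, valid for any positive matrix $Y$ (eigenvalues arranged decreasingly), which follows at once from functional calculus and the monotonicity of $\log$. Substituting this into the weak majorization
\[\lambda(\log M)\prec_w \lambda(\log N)\]
supplied by Theorem \ref{thm_log} turns the partial-sum inequalities $\sum_{j=1}^{k}\lambda_j(\log M)\le \sum_{j=1}^{k}\lambda_j(\log N)$ into the scalar inequalities
\[\sum_{j=1}^{k}\log\lambda_j(M)\le \sum_{j=1}^{k}\log\lambda_j(N),\quad k=1,\dots,n.\]

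Finally, I would rewrite each side via $\sum\log=\log\prod$ and apply $\exp$, which preserves the inequality since it is strictly increasing, to reach $\prod_{j=1}^{k}\lambda_j(M)\le \prod_{j=1}^{k}\lambda_j(N)$, the desired conclusion. There is essentially no obstacle in this argument; the only point worth isolating is the spectral identity $\lambda_j(\log Y)=\log\lambda_j(Y)$, which is an immediate consequence of the spectral theorem applied to the positive matrix $Y$.
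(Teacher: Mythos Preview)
Your proposal is correct and follows essentially the same route as the paper: start from the weak majorization of Theorem~\ref{thm_log}, use the spectral identity $\lambda_j(\log Y)=\log\lambda_j(Y)$ to convert $\sum\lambda_j(\log\,\cdot\,)$ into $\sum\log\lambda_j(\cdot)$, rewrite as $\log\prod$, and exponentiate. Your added remark on the positive definiteness of $M$ and $N$ (ensuring $\log$ is well defined) is a welcome clarification that the paper leaves implicit.
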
 
 \begin{proof}
 From Theorem\ref{thm_log}, we have
 \begin{align*}
 \sum_{j=1}^{k}\lambda_j\left(\log f\left(\frac{A+B}{2}\right)\right)&\leq 
 \sum_{j=1}^{k}\lambda_j\left(\log\int_{0}^{1}p(t)f((1-t)A+tB))dt\right)\\
 \Rightarrow \sum_{j=1}^{k}\log\lambda_j\left( f\left(\frac{A+B}{2}\right)\right)&\leq 
 \sum_{j=1}^{k}\log\lambda_j\left(\int_{0}^{1}p(t)f((1-t)A+tB))dt\right)\\
 \Rightarrow \log\prod_{j=1}^{k}\lambda_j\left(f\left(\frac{A+B}{2}\right)\right)&\leq  \log\prod_{j=1}^{k}\lambda_j\left(\int_{0}^{1}p(t)f((1-t)A+tB)dt\right),
 \end{align*}
 which implies the desired inequality.
 \end{proof}

 \section{Levin-Ste\v{c}kin matrix inequalities}
 We begin by presenting a new  inequality of Levin-Ste\v{c}kin type. The significance of this inequality is its validity for any positive function $p$; without imposing any conditions on its symmetry or monotony.
 \begin{theorem}\label{thm_general}
 Let $f:[0,1]\to\mathbb{R}$ be convex differentiable and let $p:[0,1]\to[0,\infty)$ be continuous. Then
 \begin{align*}
 \int_{0}^{1}f(t)dt\int_{0}^{1}p(t)dt+\left(\int_{0}^{1}f'(t)dt\int_{0}^{1}tp(t)dt-\int_{0}^{1}tf'(t)dt\int_{0}^{1}p(t)dt\right)\leq\int_{0}^{1}f(t)p(t)dt.
 \end{align*}
 Further,
 \[\int\limits_{0}^{1}{p\left( t \right)f\left( t \right)dt}+\frac{1}{2}\int\limits_{0}^{1}{p\left( t \right)f'\left( t \right)dt}-\int\limits_{0}^{1}{p\left( t \right)tf'\left( t \right)dt}\le \int\limits_{0}^{1}{p\left( t \right)dt}\int\limits_{0}^{1}{f\left( t \right)dt}.\]

 \end{theorem}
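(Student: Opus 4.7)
The key fact I would use is the standard tangent-line characterization of a convex differentiable function: for every $s,t\in[0,1]$,
\[
f(s)\ \ge\ f(t)+f'(t)(s-t). \tag{$*$}
\]
Both of the claimed inequalities fall out of $(*)$ by integrating over the unit square $[0,1]\times[0,1]$ in two different ways, one weight being $p$ and the other being Lebesgue measure.

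For the first inequality, I would fix $s$ and integrate $(*)$ in $t$ against the uniform measure on $[0,1]$. This yields the one-variable inequality
\[
f(s)\ \ge\ \int_0^1 f(t)\,dt + s\int_0^1 f'(t)\,dt - \int_0^1 tf'(t)\,dt,
\]
which is just the statement that $f$ lies above a specific affine function in $s$. Multiplying by $p(s)\ge 0$ and integrating over $s\in[0,1]$ produces exactly
\[
\int_0^1 p(s)f(s)\,ds\ \ge\ \int_0^1 p(s)\,ds\int_0^1 f(t)\,dt + \int_0^1 sp(s)\,ds\int_0^1 f'(t)\,dt - \int_0^1 p(s)\,ds\int_0^1 tf'(t)\,dt,
\]
which is the first asserted inequality after renaming $s\to t$ in the outer integrals.

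For the second inequality, I would reverse the roles of the two measures. Multiply $(*)$ by $p(t)\ge 0$ and integrate in $t$:
\[
f(s)\int_0^1 p(t)\,dt\ \ge\ \int_0^1 p(t)f(t)\,dt + s\int_0^1 p(t)f'(t)\,dt - \int_0^1 tp(t)f'(t)\,dt.
\]
Now integrate in $s$ against the uniform measure on $[0,1]$; the term $\int_0^1 s\,ds=\tfrac12$ accounts for the factor $\tfrac12$ in the claim, giving
\[
\int_0^1 f(s)\,ds\int_0^1 p(t)\,dt\ \ge\ \int_0^1 p(t)f(t)\,dt + \tfrac12\int_0^1 p(t)f'(t)\,dt - \int_0^1 tp(t)f'(t)\,dt,
\]
which rearranges to the second assertion.

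There is really no serious obstacle: continuity of $p$ and $f,f'$ on $[0,1]$ legitimizes all the integrals and the tacit Fubini swap, and $p\ge 0$ preserves the direction of inequality when we multiply through by it. The only thing to be careful about is simply to pick the right order of integration for each of the two claims, since swapping which variable is weighted by $p$ and which by Lebesgue measure is exactly what distinguishes the two inequalities.
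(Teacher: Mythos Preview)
Your proposal is correct and follows essentially the same argument as the paper: both proofs start from the tangent-line inequality $f(s)+f'(s)(t-s)\le f(t)$ for convex differentiable $f$ and obtain the two inequalities by integrating over $[0,1]\times[0,1]$, placing the weight $p$ on the non-tangent-point variable for the first inequality and on the tangent-point variable for the second. Your ``integrate over the unit square, swapping which factor carries $p$'' framing is a tidy way to say exactly what the paper does, with only the cosmetic differences of exchanging the names $s,t$ and the order of the two integrations.
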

 \begin{proof}
 For the convex differentiable function $f$ and $s,t\in [0,1]$ we have
 \begin{equation}\label{8}
f(s)+f'(s)(t-s)\leq f(t).
 \end{equation}
Since $p(t)\geq 0,$ it follows that
 $$p(t)f(s)+p(t)f'(s)(t-s)\leq p(t)f(t), s,t\in [0,1].$$
 Integrating this inequality over $t\in [0,1]$ then over $s\in[0,1]$ implies
 \begin{align*}
  \int_{0}^{1}f(s)dt\int_{0}^{1}p(t)dt+\left(\int_{0}^{1}f'(s)dt\int_{0}^{1}tp(t)dt-\int_{0}^{1}sf'(s)dt\int_{0}^{1}p(t)dt\right)\leq\int_{0}^{1}f(t)p(t)dt,
 \end{align*}
which is equivalent to the first desired inequality.

For the second inequality,  integrating \eqref{8} over $t\in \left[ 0,1 \right]$, we obtain 
	\[f\left( s \right)+f'\left( s \right)\left( \frac{1}{2}-s \right)\le \int\limits_{0}^{1}{f\left( t \right)dt}.\]
If we put $s=t$, we have
	\[f\left( t \right)+f'\left( t \right)\left( \frac{1}{2}-t \right)\le \int\limits_{0}^{1}{f\left( t \right)dt}.\]
Multiplying both sides by $p\left( t \right)$, we get
	\[p\left( t \right)f\left( t \right)+p\left( t \right)f'\left( t \right)\left( \frac{1}{2}-t \right)\le p\left( t \right)\int\limits_{0}^{1}{f\left( t \right)dt}.\]
Again, if we take integral over $t\in \left[ 0,1 \right]$ we infer that
\[\int\limits_{0}^{1}{p\left( t \right)f\left( t \right)dt}+\frac{1}{2}\int\limits_{0}^{1}{p\left( t \right)f'\left( t \right)dt}-\int\limits_{0}^{1}{p\left( t \right)tf'\left( t \right)dt}\le \int\limits_{0}^{1}{p\left( t \right)dt}\int\limits_{0}^{1}{f\left( t \right)dt}.\]
This completes the proof.
 \end{proof}

 \begin{corollary}
 Let $f:[0,1]\to\mathbb{R}$ be convex differentiable and let $p:[0,1]\to[0,\infty)$  be symmetric about $\frac{1}{2}$ and non-decreasing on $\left[0,\frac{1}{2}\right].$ Then
 \begin{align*}
 \int_{0}^{1}f'(t)dt\int_{0}^{1}tp(t)dt\leq \int_{0}^{1}tf'(t)dt\int_{0}^{1}p(t)dt.
 \end{align*}

 \end{corollary}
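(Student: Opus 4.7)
The plan is to obtain the corollary by combining the first inequality of Theorem \ref{thm_general} with the classical Levin-Ste\v{c}kin inequality \eqref{3} stated in the introduction. The first inequality of Theorem \ref{thm_general} can be rearranged as
\[
\int_{0}^{1}f'(t)\,dt\int_{0}^{1}tp(t)\,dt-\int_{0}^{1}tf'(t)\,dt\int_{0}^{1}p(t)\,dt\le \int_{0}^{1}f(t)p(t)\,dt-\int_{0}^{1}f(t)\,dt\int_{0}^{1}p(t)\,dt,
\]
so proving the corollary reduces to showing that the right-hand side is nonpositive.

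For the second step, I would invoke the classical Levin-Ste\v{c}kin inequality \eqref{3}. Under the hypothesis that $p$ is symmetric about $\tfrac{1}{2}$ and non-decreasing on $[0,\tfrac{1}{2}]$, and that $f$ is convex, \eqref{3} yields
\[
\int_{0}^{1}p(t)f(t)\,dt\le \int_{0}^{1}p(t)\,dt\int_{0}^{1}f(t)\,dt,
\]
which is exactly the statement that the right-hand side of the rearranged inequality is $\le 0$. Substituting this into the rearranged form of Theorem \ref{thm_general} immediately delivers
\[
\int_{0}^{1}f'(t)\,dt\int_{0}^{1}tp(t)\,dt\le \int_{0}^{1}tf'(t)\,dt\int_{0}^{1}p(t)\,dt,
\]
as desired.

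Since both ingredients are already in hand (Theorem \ref{thm_general} is proved in the excerpt, and \eqref{3} is cited from \cite{levin}), the argument is essentially a one-line manipulation and no real obstacle is expected. The only point of care is ensuring that the differentiability assumption on $f$ in Theorem \ref{thm_general} is compatible with the convexity hypothesis used in \eqref{3}, which it plainly is.
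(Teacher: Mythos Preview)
Your proposal is correct and takes essentially the same approach as the paper: the paper's proof likewise combines the first inequality of Theorem~\ref{thm_general} with the Levin--Ste\v{c}kin inequality $\int_0^1 f(t)p(t)\,dt\le \int_0^1 f(t)\,dt\int_0^1 p(t)\,dt$ to force the relevant difference to be nonpositive.
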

 \begin{proof}
 This follows from the first inequality in Theorem \ref{thm_general} because when $p$ is symmetric about $1/2$ and non-decreasing on $\left[0,\frac{1}{2}\right]$, we have $$\int_{0}^{1}f(t)p(t)dt\leq \int_{0}^{1}f(t)dt\int_{0}^{1}p(t)dt.$$
 \end{proof}

 The following is the operator Levin-St\v{e}ckin inequality, see also \cite[Theorem 2]{dragomir1}.
 \begin{theorem}
 Let $f$ be operator convex and $p:[0,1]\to [0,\infty)$ be symmetric about $t=\frac{1}{2}$ and non-decreasing on $\left[0,\frac{1}{2}\right].$ Then
  \[\int\limits_{0}^{1}{p\left( t \right)f\left( \left( 1-t \right)A+tB \right)dt}\le \int\limits_{0}^{1}{p\left( t \right)dt}\int\limits_{0}^{1}{f\left( \left( 1-t \right)A+tB \right)dt}.\]
 \end{theorem}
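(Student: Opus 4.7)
The plan is to reduce the operator Levin-Ste\v{c}kin inequality to the classical scalar inequality \eqref{3} by testing against arbitrary unit vectors, exactly as in the proofs of Theorems \ref{thm1} and \ref{thm_log}, but now aiming for the stronger partial Loewner ordering rather than weak majorization.

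First I would introduce the matrix-valued map $g(t):=f\bigl((1-t)A+tB\bigr)$ and observe that it inherits operator convexity in $t$ from the operator convexity of $f$: for any $s,t\in[0,1]$ and $\lambda\in[0,1]$, the argument
\[
(1-(\lambda s+(1-\lambda)t))A+(\lambda s+(1-\lambda)t)B=\lambda\bigl((1-s)A+sB\bigr)+(1-\lambda)\bigl((1-t)A+tB\bigr),
\]
so operator convexity of $f$ yields $g(\lambda s+(1-\lambda)t)\le \lambda g(s)+(1-\lambda)g(t)$ in the Loewner order. Consequently, for every unit vector $x\in\mathbb{C}^n$, the scalar function $h_x(t):=\langle g(t)x,x\rangle$ is convex on $[0,1]$.

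Next I would apply the classical Levin-Ste\v{c}kin inequality \eqref{3} (valid for $p$ with the assumed symmetry and monotonicity) to $h_x$, obtaining
\[
\int_{0}^{1}p(t)\,h_x(t)\,dt\le \int_{0}^{1}p(t)\,dt\int_{0}^{1}h_x(t)\,dt.
\]
Interchanging the inner product and the (entrywise Riemann) integral, which is justified by the continuity of $g$, rewrites this as
\[
\left\langle\left(\int_{0}^{1}p(t)f((1-t)A+tB)\,dt\right)x,x\right\rangle\le \int_{0}^{1}p(t)\,dt\left\langle\left(\int_{0}^{1}f((1-t)A+tB)\,dt\right)x,x\right\rangle.
\]
Since $x$ is an arbitrary unit vector and both integrals define Hermitian matrices, the desired Loewner inequality follows.

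I do not expect a serious obstacle: operator convexity of $f$ is tailor-made to turn $h_x$ into a genuine scalar convex function, after which the classical Levin-Ste\v{c}kin inequality does the real work. The only point requiring a small care is the commutation of $\langle\cdot x,x\rangle$ with the integrals, which is routine for continuous matrix-valued integrands. It is worth emphasizing, however, why the conclusion is stronger than in Theorems \ref{thm1} and \ref{thm_log}: the present hypothesis is strong enough to yield a pointwise (in $x$) inequality of quadratic forms, hence the full Loewner order, rather than only a statement about partial sums of eigenvalues.
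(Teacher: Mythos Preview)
Your proposal is correct and follows essentially the same route as the paper: define $F(t)=\langle f((1-t)A+tB)x,x\rangle$, observe it is scalar convex (the paper cites \cite{dragomir} for this, while you supply the direct verification via operator convexity of $t\mapsto f((1-t)A+tB)$), apply the scalar Levin--Ste\v{c}kin inequality, and conclude the Loewner order by arbitrariness of $x$. The only cosmetic difference is that you spell out the convexity argument and the interchange of integral and inner product, which the paper takes for granted.
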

 \begin{proof}
  Let $x\in\mathbb{C}^n$ be a unit vector. Since the function $F\left( t \right)=\left\langle f\left( \left( 1-t \right)A+tB \right)x,x \right\rangle $ is a real-valued convex function on $\left[ 0,1 \right]$ (see \cite[Theorem 1]{dragomir}), we have  
\[\begin{aligned}
   \left\langle \int\limits_{0}^{1}{p\left( t \right)f\left( \left( 1-t \right)A+tB \right)dt}x,x \right\rangle &=\int\limits_{0}^{1}{p\left( t \right)\left\langle f\left( \left( 1-t \right)A+tB \right)x,x \right\rangle dt} \\ 
 & \le \int\limits_{0}^{1}{p\left( t \right)dt}\int\limits_{0}^{1}{\left\langle f\left( \left( 1-t \right)A+tB \right)x,x \right\rangle }dt \\ 
 & =\left\langle \int\limits_{0}^{1}{p\left( t \right)dt}\int\limits_{0}^{1}{f\left( \left( 1-t \right)A+tB \right)dt}x,x \right\rangle .  
\end{aligned}\]
Therefore,
  \[\int\limits_{0}^{1}{p\left( t \right)f\left( \left( 1-t \right)A+tB \right)dt}\le \int\limits_{0}^{1}{p\left( t \right)dt}\int\limits_{0}^{1}{f\left( \left( 1-t \right)A+tB \right)dt}.\]
 \end{proof}
 
 The following theorem gives a reverse for the operator Levin-St\v{e}ckin's inequality by employing the Mond- Pe\v cari\'c method \cite{mond-pecaric}.
 \begin{theorem}
  Let $f:\left[ m,M \right]\to \mathbb{R}$ be  convex and let $p:[0,1]\to [0,\infty)$ be symmetric about $t=\frac{1}{2}.$ If $A,B\in\mathcal{M}_n$ are Hermitian with spectra in the interval $\left[ m,M \right]$, then for any $\alpha \ge 0$
\[\int\limits_{0}^{1}{p\left( t \right)dt}\int\limits_{0}^{1}{f\left( \left( 1-t \right)A+tB \right)dt}\le \beta \int\limits_{0}^{1}{p\left( t \right)dt}I+\alpha \int\limits_{0}^{1}{p\left( t \right)f\left( \left( 1-t \right)A+tB \right)dt},\]
where $\beta =\underset{m\le x\le M}{\mathop{\max }}\,\left\{ {{a}_{f}}x+{{b}_{f}}-\alpha f\left( x \right) \right\}$, ${{a}_{f}}=\frac{f\left( M \right)-f\left( m \right)}{M-m}$, and ${{a}_{f}}=\frac{Mf\left( m \right)-mf\left( M \right)}{M-m}$.
  \end{theorem}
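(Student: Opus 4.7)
The plan is to apply the Mond--Pe\v{c}ari\'c method, which extracts from convexity two complementary scalar inequalities that combine cleanly at the operator level. For a convex $f:[m,M]\to\mathbb{R}$, the chord bound gives $f(x)\le a_f x+b_f$ on $[m,M]$; moreover, by the very definition of $\beta=\max_{m\le x\le M}\{a_f x+b_f-\alpha f(x)\}$, every $x\in[m,M]$ also satisfies the dual estimate $a_f x+b_f\le \beta+\alpha f(x)$.

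Next I would lift both inequalities to matrices via functional calculus. Since $A,B$ are Hermitian with spectra in $[m,M]$, for any $t\in[0,1]$ the matrix $X_t:=(1-t)A+tB$ is Hermitian with spectrum in $[m,M]$, because for every unit vector $u$ the quantity $\langle X_t u,u\rangle$ is a convex combination of two numbers in $[m,M]$. Applying the two scalar bounds at the spectral level yields
\[f(X_t)\le a_f X_t+b_f I \quad\text{and}\quad a_f X_t+b_f I\le \beta I+\alpha f(X_t).\]

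Now I would integrate against the weight. Multiplying the second matrix inequality by $p(t)\ge 0$ and integrating over $[0,1]$, while using the symmetry $p(1-t)=p(t)$ to collapse $\int_0^1 p(t)X_t\,dt$ into $\int_0^1 p(t)dt\cdot\tfrac{A+B}{2}$ (since the symmetry forces $\int_0^1 p(t)t\,dt=\int_0^1 p(t)(1-t)dt=\tfrac12\int_0^1 p(t)dt$), I obtain
\[\int_{0}^{1}p(t)dt\left(a_f\frac{A+B}{2}+b_f I\right)\le \beta\int_{0}^{1}p(t)dt\cdot I+\alpha\int_{0}^{1}p(t)f(X_t)\,dt.\]
Separately, integrating the first matrix inequality over $t\in[0,1]$ produces $\int_0^1 f(X_t)dt\le a_f\tfrac{A+B}{2}+b_f I$; multiplying this by the nonnegative scalar $\int_0^1 p(t)dt$ and chaining with the previous display yields precisely the claimed estimate.

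The main difficulty is conceptual rather than computational: one must recognize that the term $\alpha\int_0^1 p(t)f(X_t)dt$ on the right can only be produced by invoking the Mond--Pe\v{c}ari\'c dual bound $a_f x+b_f\le \beta+\alpha f(x)$ \emph{in addition to}, not in place of, the elementary chord bound $f(x)\le a_f x+b_f$. Once both bounds are in play, the symmetry of $p$ does the rest, and no finer spectral information (such as monotonicity of $p$ on $[0,\tfrac12]$) is required.
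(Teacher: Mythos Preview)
Your argument is correct, and it is a genuinely different---and cleaner---route than the paper's. The paper also starts from the chord bound $f(X_t)\le a_f X_t+b_f I$ and integrates to reach $\int_0^1 p(t)\,dt\,(a_f\tfrac{A+B}{2}+b_f I)$, but from there it descends to scalars: it fixes a unit vector $x$, subtracts $\alpha\int_0^1 p(t)f(\langle X_t x,x\rangle)\,dt$, invokes the scalar Fej\'er estimate (the paper's inequality~(2.3)) to replace that integral by $\alpha\int_0^1 p(t)\,dt\, f(\langle\tfrac{A+B}{2}x,x\rangle)$, applies the definition of $\beta$ at the single scalar $\langle\tfrac{A+B}{2}x,x\rangle$, and finally climbs back up via the Mond--Pe\v{c}ari\'c Jensen inequality $f(\langle X_t x,x\rangle)\le\langle f(X_t)x,x\rangle$. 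You bypass all of this by applying the dual bound $a_f x+b_f\le\beta+\alpha f(x)$ \emph{pointwise in $t$} at the spectral level, and then letting the symmetry of $p$ collapse $\int_0^1 p(t)X_t\,dt$ to $\int_0^1 p(t)\,dt\cdot\tfrac{A+B}{2}$ so the two chains meet. Your version never leaves the Loewner order and needs neither the scalar Fej\'er inequality nor the operator Jensen inequality; the paper's version, by contrast, makes the role of the Mond--Pe\v{c}ari\'c machinery more explicit but at the cost of a longer detour through inner products.
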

  \begin{proof}
Since
\[f\left( x \right)\le {{a}_{f}}x+{{b}_{f}},\]
we get by the functional calculus 
\[f\left( \left( 1-t \right)A+tB \right)\le {{a}_{f}}\left( \left( 1-t \right)A+tB \right)+{{b}_{f}}I.\]
By taking integral over $0\le t\le 1$ , we reach to 
\[\int\limits_{0}^{1}{f\left( \left( 1-t \right)A+tB \right)dt}\le {{a}_{f}}\left( \frac{A+B}{2} \right)+{{b}_{f}}I.\] 
This implies, 
\[\int\limits_{0}^{1}{p\left( t \right)dt}\int\limits_{0}^{1}{f\left( \left( 1-t \right)A+tB \right)dt}\le \int\limits_{0}^{1}{p\left( t \right)dt}{{a}_{f}}\left( \frac{A+B}{2} \right)+\int\limits_{0}^{1}{p\left( t \right)dt}{{b}_{f}}I.\] 
Hence for any vector $x$,                                                                                                                   \[\left\langle \left( \int\limits_{0}^{1}{p\left( t \right)dt}\int\limits_{0}^{1}{f\left( \left( 1-t \right)A+tB \right)dt} \right)x,x \right\rangle \le \int\limits_{0}^{1}{p\left( t \right)dt}{{a}_{f}}\left\langle \left( \frac{A+B}{2} \right)x,x \right\rangle +\int\limits_{0}^{1}{p\left( t \right)dt}{{b}_{f}}.\]
Now, by \eqref{4}, we can write 
                                                                                      \[\begin{aligned} 
  & \left\langle \left( \int\limits_{0}^{1}{p\left( t \right)dt}\int\limits_{0}^{1}{f\left( \left( 1-t \right)A+tB \right)dt} \right)x,x \right\rangle -\alpha \int\limits_{0}^{1}{p\left( t \right)f\left( \left\langle \left( \left( 1-t \right)A+tB \right)x,x \right\rangle  \right)dt} \\ 
 & \le \int\limits_{0}^{1}{p\left( t \right)dt}{{a}_{f}}\left\langle \left( \frac{A+B}{2} \right)x,x \right\rangle +\int\limits_{0}^{1}{p\left( t \right)dt}{{b}_{f}}-\alpha \int\limits_{0}^{1}{p\left( t \right)f\left( \left\langle \left( \left( 1-t \right)A+tB \right)x,x \right\rangle  \right)dt} \\ 
 & \le \int\limits_{0}^{1}{p\left( t \right)dt}{{a}_{f}}\left\langle \left( \frac{A+B}{2} \right)x,x \right\rangle +\int\limits_{0}^{1}{p\left( t \right)dt}{{b}_{f}}-\alpha \int\limits_{0}^{1}{p\left( t \right)dt}f\left( \left\langle \left( \frac{A+B}{2} \right)x,x \right\rangle  \right) \\ 
 & =\int\limits_{0}^{1}{p\left( t \right)dt}\left( {{a}_{f}}\left\langle \left( \frac{A+B}{2} \right)x,x \right\rangle +{{b}_{f}}-\alpha f\left( \left\langle \left( \frac{A+B}{2} \right)x,x \right\rangle  \right) \right) \\ 
 & \le \int\limits_{0}^{1}{p\left( t \right)dt}\underset{m\le x\le M}{\mathop{\max }}\,\left\{ {{a}_{f}}x+{{b}_{f}}-\alpha f\left( x \right) \right\}. \\ 
\end{aligned}\] 
Thus, 
\[\begin{aligned} 
  & \left\langle \left( \int\limits_{0}^{1}{p\left( t \right)dt}\int\limits_{0}^{1}{f\left( \left( 1-t \right)A+tB \right)dt} \right)x,x \right\rangle  \\ 
 & \le \int\limits_{0}^{1}{p\left( t \right)dt}\beta +\alpha \int\limits_{0}^{1}{p\left( t \right)f\left( \left\langle \left( \left( 1-t \right)A+tB \right)x,x \right\rangle  \right)dt} \\ 
 & \le \int\limits_{0}^{1}{p\left( t \right)dt}\beta +\alpha \int\limits_{0}^{1}{p\left( t \right)\left\langle f\left( \left( 1-t \right)A+tB \right)x,x \right\rangle dt} \\ 
 &\qquad \text{(by \cite[Theorem 1.2]{mond-pecaric})}\\
 & =\left\langle \left( \int\limits_{0}^{1}{p\left( t \right)dt}\beta I+\alpha \int\limits_{0}^{1}{p\left( t \right)f\left( \left( 1-t \right)A+tB \right)dt} \right)x,x \right\rangle  \\ 
\end{aligned}\] 
as desired.
  \end{proof}
  
  \section{Further inequalities via synchronous functions}
  We say that the functions $f,g:J\to \mathbb{R}$ are synchronous (asynchronous) on the interval $J$ if they satisfy the following condition:
  	\[\left( f\left( t \right)-f\left( s \right) \right)\left( g\left( t \right)-g\left( s \right) \right)\ge \left( \le  \right)0,\text{ }\forall s,t\in J.\]
  It is obvious that, if $f, g$ are monotonic and have the same monotonicity
  on the interval $J$, then they are synchronous on $J$ while if they have
  opposite monotonicity, they are asynchronous.
  
Related to the Levin-Ste\v{c}kin inequality,   the celebrated \^Ceby\^sev inequality \cite{ceb} states that if $f$ and $g$ are two functions having the same monotonicity on $\left[ 0,1 \right]$, then
  \[\int\limits_{0}^{1}{f\left( t \right)dt}\int\limits_{0}^{1}{g\left( t \right)dt}\le \int\limits_{0}^{1}{f\left( t \right)g\left( t \right)dt}.\]
  The following result provides a refinement and a reverse of this inequality, via synchronous functions.
  \begin{theorem}\label{01}
  Let $f,g\text{:}\left[ a,b \right]\to \mathbb{R}$ be synchronous functions on the interval $\left[ a,b \right]$. Then
  \[\begin{aligned}
    & \min \left\{ \frac{1}{b-a}\int\limits_{a}^{b}{{{f}^{2}}\left( t \right)dt}-{{\left( \frac{1}{b-a}\int\limits_{a}^{b}{f\left( t \right)dt} \right)}^{2}},\frac{1}{b-a}\int\limits_{a}^{b}{{{g}^{2}}\left( t \right)dt}-{{\left( \frac{1}{b-a}\int\limits_{a}^{b}{g\left( t \right)dt} \right)}^{2}} \right\} \\ 
   & \le \frac{1}{b-a}\int\limits_{a}^{b}{f\left( t \right)g\left( t \right)dt}-\frac{1}{b-a}\int\limits_{a}^{b}{f\left( t \right)dt}\frac{1}{b-a}\int\limits_{a}^{b}{g\left( t \right)dt} \\ 
   & \max \left\{ \frac{1}{b-a}\int\limits_{a}^{b}{{{f}^{2}}\left( t \right)dt}-{{\left( \frac{1}{b-a}\int\limits_{a}^{b}{f\left( t \right)dt} \right)}^{2}},\frac{1}{b-a}\int\limits_{a}^{b}{{{g}^{2}}\left( t \right)dt}-{{\left( \frac{1}{b-a}\int\limits_{a}^{b}{g\left( t \right)dt} \right)}^{2}} \right\}.  
  \end{aligned}\]
If $f$ and $g$ have the opposite monotonicity then
\[\begin{aligned}
  & \min \left\{ \frac{1}{b-a}\int\limits_{a}^{b}{{{f}^{2}}\left( t \right)dt}-{{\left( \frac{1}{b-a}\int\limits_{a}^{b}{f\left( t \right)dt} \right)}^{2}},\frac{1}{b-a}\int\limits_{a}^{b}{{{g}^{2}}\left( t \right)dt}-{{\left( \frac{1}{b-a}\int\limits_{a}^{b}{g\left( t \right)dt} \right)}^{2}} \right\} \\ 
 & \le \frac{1}{b-a}\int\limits_{a}^{b}{f\left( t \right)dt}\frac{1}{b-a}\int\limits_{a}^{b}{g\left( t \right)dt}-\frac{1}{b-a}\int\limits_{a}^{b}{f\left( t \right)g\left( t \right)dt} \\ 
 & \le \max \left\{ \frac{1}{b-a}\int\limits_{a}^{b}{{{f}^{2}}\left( t \right)dt}-{{\left( \frac{1}{b-a}\int\limits_{a}^{b}{f\left( t \right)dt} \right)}^{2}},\frac{1}{b-a}\int\limits_{a}^{b}{{{g}^{2}}\left( t \right)dt}-{{\left( \frac{1}{b-a}\int\limits_{a}^{b}{g\left( t \right)dt} \right)}^{2}} \right\}.  
\end{aligned}\]
   \end{theorem}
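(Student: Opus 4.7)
The plan is to unify all three quantities appearing in the theorem through the classical Korkine identity. For $u,v\in L^{2}[a,b]$ write
\[\mathcal{T}(u,v):=\frac{1}{b-a}\int_{a}^{b}u(t)v(t)\,dt-\frac{1}{b-a}\int_{a}^{b}u(t)\,dt\cdot\frac{1}{b-a}\int_{a}^{b}v(t)\,dt,\]
so that Korkine's identity gives
\[\mathcal{T}(u,v)=\frac{1}{2(b-a)^{2}}\int_{a}^{b}\!\int_{a}^{b}\bigl(u(t)-u(s)\bigr)\bigl(v(t)-v(s)\bigr)\,ds\,dt.\]
Consequently $\mathcal{T}(f,f)$, $\mathcal{T}(g,g)$ and $\mathcal{T}(f,g)$ are the double integrals of $(f(t)-f(s))^{2}$, $(g(t)-g(s))^{2}$ and their mixed product respectively. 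Synchronicity forces the mixed integrand to be pointwise non-negative, which is the structural feature driving the argument.

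For the upper bound in the synchronous case, I would apply the pointwise AM-GM inequality $2pq\le p^{2}+q^{2}$ with $p=f(t)-f(s)$ and $q=g(t)-g(s)$, and then integrate, to obtain
\[2\,\mathcal{T}(f,g)\le \mathcal{T}(f,f)+\mathcal{T}(g,g)\le 2\max\{\mathcal{T}(f,f),\mathcal{T}(g,g)\}.\]
A Cauchy-Schwarz estimate on the same double integral refines this to the Gr\"uss-type bound $\mathcal{T}(f,g)\le\sqrt{\mathcal{T}(f,f)\mathcal{T}(g,g)}\le\max\{\mathcal{T}(f,f),\mathcal{T}(g,g)\}$. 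The asynchronous case reduces to the synchronous one by the substitution $g\mapsto -g$: the pair $(f,-g)$ is then synchronous and $\mathcal{T}(f,-g)=-\mathcal{T}(f,g)$, so the whole chain transfers verbatim with $\mathcal{T}(f,g)$ replaced by $-\mathcal{T}(f,g)$.

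The main obstacle is the lower bound $\min\{\mathcal{T}(f,f),\mathcal{T}(g,g)\}\le\mathcal{T}(f,g)$, since both AM-GM and Cauchy-Schwarz furnish only upper estimates on $\mathcal{T}(f,g)$. A tempting pointwise candidate is $pq\ge\min\{p^{2},q^{2}\}$, valid whenever $p$ and $q$ share a sign, but $\min$ does not commute with $\iint ds\,dt$ in the needed direction. I would therefore attempt a domain-splitting argument: partition $[a,b]^{2}$ into $\{|f(t)-f(s)|\le|g(t)-g(s)|\}$ and its complement, estimate $\mathcal{T}(f,g)$ from below on each piece by the smaller of the two squares, and recombine using the $(s,t)\leftrightarrow(t,s)$ symmetry of the integrand. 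This symmetrisation step is where I expect the argument to demand the most care.
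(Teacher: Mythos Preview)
Your Korkine-identity setup and the upper-bound argument are sound; in fact the Cauchy--Schwarz route you sketch, giving $\mathcal{T}(f,g)\le\sqrt{\mathcal{T}(f,f)\,\mathcal{T}(g,g)}\le\max\{\mathcal{T}(f,f),\mathcal{T}(g,g)\}$, is cleaner than what the paper does. The paper argues both inequalities from the pointwise bounds
\[
\min\bigl\{(f(t)-f(s))^{2},(g(t)-g(s))^{2}\bigr\}\le (f(t)-f(s))(g(t)-g(s))\le\max\bigl\{(f(t)-f(s))^{2},(g(t)-g(s))^{2}\bigr\}
\]
(valid by synchronicity) and then integrates in $t$ and in $s$, silently replacing $\iint\min\{\cdots\}$ by $\min\bigl\{\iint\cdots,\iint\cdots\bigr\}$ and likewise for $\max$. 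That replacement is precisely the step you flagged as suspect, and it goes the wrong way in both cases: one always has $\iint\min\{A,B\}\le\min\bigl\{\iint A,\iint B\bigr\}$ and $\iint\max\{A,B\}\ge\max\bigl\{\iint A,\iint B\bigr\}$, neither of which is the direction needed here.

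Your instinct to isolate the lower bound as the real obstacle was correct, but no domain-splitting repair will succeed, because the inequality $\min\{\mathcal{T}(f,f),\mathcal{T}(g,g)\}\le\mathcal{T}(f,g)$ is in fact false. On $[0,1]$ take $f(t)=t$ and $g(t)=t^{3}$; both are increasing, hence synchronous. Then
\[
\mathcal{T}(f,f)=\tfrac{1}{3}-\tfrac{1}{4}=\tfrac{1}{12},\qquad \mathcal{T}(g,g)=\tfrac{1}{7}-\tfrac{1}{16}=\tfrac{9}{112},\qquad \mathcal{T}(f,g)=\tfrac{1}{5}-\tfrac{1}{8}=\tfrac{3}{40},
\]
and $\min\bigl\{\tfrac{1}{12},\tfrac{9}{112}\bigr\}=\tfrac{9}{112}\approx 0.0804>0.075=\tfrac{3}{40}$. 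So the stated lower bound (and with it the asynchronous variant obtained by $g\mapsto -g$) cannot hold as written. The upper bound does survive, but only via your Cauchy--Schwarz argument; the paper's $\max$-interchange is just as illegitimate as its $\min$-interchange, and it is only the truth of the conclusion that masks the error there.
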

  \begin{proof}
  We prove the first inequality. The second inequality goes likewise and we omit the details. We have
  \[\begin{aligned}
    & f\left( t \right)g\left( t \right)+f\left( s \right)g\left( s \right)-\left( f\left( t \right)g\left( s \right)+f\left( s \right)g\left( t \right) \right) \\ 
   & =\left( f\left( t \right)-f\left( s \right) \right)\left( g\left( t \right)-g\left( s \right) \right) \\ 
   & =\left| \left( f\left( t \right)-f\left( s \right) \right)\left( g\left( t \right)-g\left( s \right) \right) \right| \\ 
   & =\left| f\left( t \right)-f\left( s \right) \right|\left| g\left( t \right)-g\left( s \right) \right| \\ 
   & \ge \min \left\{ {{\left( f\left( t \right)-f\left( s \right) \right)}^{2}},{{\left( g\left( t \right)-g\left( s \right) \right)}^{2}} \right\} \\ 
   & =\min \left\{  {{f}^{2}}\left( t \right)+{{f}^{2}}\left( s \right)-2f\left( t \right)f\left( s \right), {{g}^{2}}\left( t \right)+{{g}^{2}}\left( s \right)-2g\left( t \right)g\left( s \right)  \right\}.  
  \end{aligned}\]
      Therefore,
\[\begin{aligned}
  & \min \left\{ {{f}^{2}}\left( s \right)+{{f}^{2}}\left( t \right)-2f\left( s \right)f\left( t \right),{{g}^{2}}\left( t \right)+{{g}^{2}}\left( s \right)-2g\left( t \right)g\left( s \right) \right\} \\ 
 & \le f\left( t \right)g\left( t \right)+f\left( s \right)g\left( s \right)-\left( f\left( t \right)g\left( s \right)+f\left( s \right)g\left( t \right) \right).
\end{aligned}\]
     Consequently, 
\[\begin{aligned}
  & \min \left\{ \left( b-a \right){{f}^{2}}\left( s \right)+\int\limits_{a}^{b}{{{f}^{2}}\left( t \right)dt}-2f\left( s \right)\int\limits_{a}^{b}{f\left( t \right)dt},\int\limits_{a}^{b}{{{g}^{2}}\left( t \right)dt}+\left( b-a \right){{g}^{2}}\left( s \right)-2g\left( s \right)\int\limits_{a}^{b}{g\left( t \right)dt} \right\} \\ 
 & \le \int\limits_{a}^{b}{f\left( t \right)g\left( t \right)dt}+\left( b-a \right)f\left( s \right)g\left( s \right)-g\left( s \right)\int\limits_{a}^{b}{f\left( t \right)dt}-f\left( s \right)\int\limits_{a}^{b}{g\left( t \right)dt}.  
\end{aligned}\]
 Upon integration, this implies  
\[\begin{aligned}
  & \min \left\{ 2\left( b-a \right)\int\limits_{a}^{b}{{{f}^{2}}\left( t \right)dt}-2{{\left( \int\limits_{a}^{b}{f\left( t \right)dt} \right)}^{2}},2\left( b-a \right)\int\limits_{a}^{b}{{{g}^{2}}\left( t \right)dt}-2{{\left( \int\limits_{a}^{b}{g\left( t \right)dt} \right)}^{2}} \right\} \\ 
 & \le 2\left( b-a \right)\int\limits_{a}^{b}{f\left( t \right)g\left( t \right)dt}-2\int\limits_{a}^{b}{f\left( t \right)dt}\int\limits_{a}^{b}{g\left( t \right)dt}. \\ 
\end{aligned}\]
Multiplying both sides by ${1}/{2{{\left( b-a \right)}^{2}}}\;$, we obtain,
\[\begin{aligned}
  & \min \left\{ \frac{1}{b-a}\int\limits_{a}^{b}{{{f}^{2}}\left( t \right)dt}-{{\left( \frac{1}{b-a}\int\limits_{a}^{b}{f\left( t \right)dt} \right)}^{2}},\frac{1}{b-a}\int\limits_{a}^{b}{{{g}^{2}}\left( t \right)dt}-{{\left( \frac{1}{b-a}\int\limits_{a}^{b}{g\left( t \right)dt} \right)}^{2}} \right\} \\ 
 & \le \frac{1}{b-a}\int\limits_{a}^{b}{f\left( t \right)g\left( t \right)dt}-\frac{1}{b-a}\int\limits_{a}^{b}{f\left( t \right)dt}\frac{1}{b-a}\int\limits_{a}^{b}{g\left( t \right)dt}. 
\end{aligned}\]
The second inequality obtains from the same arguments and the following relation
\[\begin{aligned}
  & \max \left\{ {{f}^{2}}\left( s \right)+{{f}^{2}}\left( t \right)-2f\left( s \right)f\left( t \right),{{g}^{2}}\left( t \right)+{{g}^{2}}\left( s \right)-2g\left( t \right)g\left( s \right) \right\} \\ 
 & \ge f\left( t \right)g\left( t \right)+f\left( s \right)g\left( s \right)-\left( f\left( t \right)g\left( s \right)+f\left( s \right)g\left( t \right) \right). \\ 
\end{aligned}\]

  \end{proof}

We establish a refinement and a reverse for the Levin-Ste\v{c}kin inequality in the next result. 
\begin{theorem}
Let $p:\left[ 0,1 \right]\to \mathbb{R}$ be a symmetric about $t=\frac{1}{2}$, namely $p\left( 1-t \right)=p\left( t \right),$  and non-decreasing  on $\left[ 0,\frac{1}{2} \right]$, then for every convex function $f$ on $[0,1]$, 
\[\begin{aligned}
  & \int\limits_{0}^{1}{p\left( t \right)f\left( t \right)dt}\le \int\limits_{0}^{1}{p\left( t \right)dt}\int\limits_{0}^{1}{f\left( t \right)dt} \\ 
 & -\min \left\{ 2\int\limits_{0}^{{1}/{2}\;}{{{p}^{2}}\left( t \right)dt}-{{\left( \int\limits_{0}^{1}{p\left( t \right)dt} \right)}^{2}},\frac{1}{2}\int\limits_{0}^{{1}/{2}\;}{{{\left( {f\left( t \right)+f\left( 1-t \right)} \right)}^{2}}dt}-{{\left(\frac{1}{2} \int\limits_{0}^{1}{({f\left( t \right)+f\left( 1-t \right)})dt} \right)}^{2}} \right\}.
\end{aligned}\]
A similar but reversed inequality holds if we replace $\min $ with $\max $.
\end{theorem}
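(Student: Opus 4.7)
The plan is to apply the asynchronous case of Theorem \ref{01} on $[0,1]$ to the pair $(p,\tilde F)$, where $\tilde F(t):=\tfrac{1}{2}(f(t)+f(1-t))$, and then translate the outcome back to the integrals appearing in the claim.

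First I would show that $p$ and $\tilde F$ have opposite monotonicity on $[0,1]$. The function $F(t):=f(t)+f(1-t)$ is convex (as a sum of convex functions) and symmetric about $t=1/2$, hence non-increasing on $[0,1/2]$ and non-decreasing on $[1/2,1]$; the same holds for $\tilde F$. Since $p$ is non-decreasing on $[0,1/2]$ and, by its symmetry, non-increasing on $[1/2,1]$, the pair $(p,\tilde F)$ is asynchronous on each half-interval. The mixed case $s\in[0,1/2]$, $t\in[1/2,1]$ reduces to the previous one via the common symmetry $p(t)=p(1-t)$ and $\tilde F(t)=\tilde F(1-t)$, so $(p(t)-p(s))(\tilde F(t)-\tilde F(s))\le 0$ throughout $[0,1]\times[0,1]$.

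Second, the symmetry of $p$ together with the substitution $u=1-t$ yields
\[\int_0^1 p(t)\tilde F(t)\,dt=\int_0^1 p(t)f(t)\,dt,\qquad \int_0^1\tilde F(t)\,dt=\int_0^1 f(t)\,dt,\]
since $\int_0^1 p(t)f(1-t)\,dt=\int_0^1 p(u)f(u)\,du$. Applying the asynchronous case of Theorem \ref{01} on $[0,1]$ to $p$ and $\tilde F$ then gives
\[\min\{V_p,V_{\tilde F}\}\le \int_0^1 p\,dt\int_0^1 f\,dt-\int_0^1 pf\,dt\le \max\{V_p,V_{\tilde F}\},\]
with $V_p=\int_0^1 p^2\,dt-(\int_0^1 p\,dt)^2$ and $V_{\tilde F}=\int_0^1 \tilde F^2\,dt-(\int_0^1\tilde F\,dt)^2$. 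Using that $p^2$ and $F^2$ are symmetric about $1/2$, I fold the variances onto $[0,1/2]$ and unpack $\tilde F=F/2$ to obtain
\[V_p=2\int_0^{1/2}p^2\,dt-\Big(\int_0^1 p\,dt\Big)^2,\quad V_{\tilde F}=\tfrac12\int_0^{1/2}F^2\,dt-\Big(\tfrac12\int_0^1 F\,dt\Big)^2,\]
which are exactly the two quantities inside the $\min$ (and $\max$) of the statement.

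The only delicate point is the bookkeeping of normalizing constants: the two entries of the $\min$ appear with apparently mismatched prefactors ($2$ versus $\tfrac12$), and this is precisely what forces the rescaling $\tilde F=F/2$ rather than using $F$ directly. Once that choice is made, both the refinement and its reverse drop out simultaneously from the lower and upper halves of Theorem \ref{01}.
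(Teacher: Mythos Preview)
Your proof is correct and rests on the same idea as the paper's: symmetrize $f$ to $\tilde F(t)=\tfrac12(f(t)+f(1-t))$ and apply the asynchronous case of Theorem~\ref{01} to the pair $(p,\tilde F)$. The only organizational difference is where you apply Theorem~\ref{01}. The paper first treats the case of a \emph{symmetric} convex $f$ and applies Theorem~\ref{01} on $[0,1/2]$, where $p$ is non-decreasing and $f$ is non-increasing so opposite monotonicity is immediate; it then reduces the general case to this one by replacing $f$ with $\tilde F$. You instead work directly on $[0,1]$ with $\tilde F$, which forces you to check asynchronicity in the mixed case $s\in[0,1/2]$, $t\in[1/2,1]$ via the common symmetry. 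Because both $p$ and $\tilde F$ are symmetric about $1/2$, the two computations are literally the same inequality after folding, so neither route gains anything over the other; yours is slightly more streamlined (no separate ``symmetric $f$'' lemma), while the paper's avoids the mixed-case check. One small caveat: Theorem~\ref{01} as stated in the paper phrases its second part for functions of \emph{opposite monotonicity} rather than merely asynchronous ones, but its proof only uses the pointwise inequality $(f(t)-f(s))(g(t)-g(s))\le 0$, so your appeal to the asynchronous condition is legitimate.
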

\begin{proof}
If if $f$ is symmetric and convex, by Theorem \ref{01}, we have
  \[\begin{aligned}
    & \int\limits_{0}^{1}{p\left( t \right)dt}\int\limits_{0}^{1}{f\left( t \right)dt} \\ 
   & =\left( \int\limits_{0}^{{1}/{2}\;}{p\left( t \right)dt}+\int\limits_{{1}/{2}\;}^{1}{p\left( t \right)dt} \right)\left( \int\limits_{0}^{{1}/{2}\;}{f\left( t \right)dt}+\int\limits_{{1}/{2}\;}^{1}{f\left( t \right)dt} \right) \\ 
   & =4\int\limits_{0}^{{1}/{2}\;}{p\left( t \right)dt}\int\limits_{0}^{{1}/{2}\;}{f\left( t \right)dt} \\ 
   & \ge 2\int\limits_{0}^{{1}/{2}\;}{p\left( t \right)f\left( t \right)dt}+\min \left\{ 2\int\limits_{0}^{{1}/{2}\;}{{{p}^{2}}\left( t \right)dt}-{{\left( 2\int\limits_{0}^{{1}/{2}\;}{p\left( t \right)dt} \right)}^{2}},2\int\limits_{0}^{{1}/{2}\;}{{{f}^{2}}\left( t \right)dt}-{{\left( 2\int\limits_{0}^{{1}/{2}\;}{f\left( t \right)dt} \right)}^{2}} \right\} \\ 
   & =\int\limits_{0}^{1}{p\left( t \right)f\left( t \right)dt}+\min \left\{ 2\int\limits_{0}^{{1}/{2}\;}{{{p}^{2}}\left( t \right)dt}-{{\left( \int\limits_{0}^{1}{p\left( t \right)dt} \right)}^{2}},2\int\limits_{0}^{{1}/{2}\;}{{{f}^{2}}\left( t \right)dt}-{{\left( \int\limits_{0}^{1}{f\left( t \right)dt} \right)}^{2}} \right\}.  
  \end{aligned}\]
Namely,  
  \[\begin{aligned}
    & \int\limits_{0}^{1}{p\left( t \right)f\left( t \right)dt}+\min \left\{ 2\int\limits_{0}^{{1}/{2}\;}{{{p}^{2}}\left( t \right)dt}-{{\left( \int\limits_{0}^{1}{p\left( t \right)dt} \right)}^{2}},2\int\limits_{0}^{{1}/{2}\;}{{{f}^{2}}\left( t \right)dt}-{{\left( \int\limits_{0}^{1}{f\left( t \right)dt} \right)}^{2}} \right\} \\ 
   & \le \int\limits_{0}^{1}{p\left( t \right)dt}\int\limits_{0}^{1}{f\left( t \right)dt}.
  \end{aligned}\]
We shall consider now an arbitrary $f$.  
  For convex $f$ the function $\frac{f\left( x \right)+f\left( 1-x \right)}{2}$ is convex and symmetric, so we can use the above inequality. Hence,
  	\[\begin{aligned}
    & \int\limits_{0}^{1}{p\left( t \right)f\left( t \right)dt} \\ 
   & =\frac{\int_{0}^{1}{p\left( t \right)f\left( t \right)dt}+\int_{0}^{1}{p\left( 1-t \right)f\left( 1-t \right)dt}}{2} \\ 
   & =\int\limits_{0}^{1}{p\left( t \right)\frac{f\left( t \right)+f\left( 1-t \right)}{2}dt} \\ 
   & \le \int\limits_{0}^{1}{p\left( t \right)dt}\int\limits_{0}^{1}{\frac{f\left( t \right)+f\left( 1-t \right)}{2}dt} \\ 
   & -\min \left\{ 2\int\limits_{0}^{{1}/{2}\;}{{{p}^{2}}\left( t \right)dt}-{{\left( \int\limits_{0}^{1}{p\left( t \right)dt} \right)}^{2}},\frac{1}{2}\int\limits_{0}^{{1}/{2}\;}{{{\left( {f\left( t \right)+f\left( 1-t \right)} \right)}^{2}}dt}-{{\left(\frac{1}{2} \int\limits_{0}^{1}{({f\left( t \right)+f\left( 1-t \right)})dt} \right)}^{2}} \right\} \\ 
   & = \int\limits_{0}^{1}{p\left( t \right)dt}\int\limits_{0}^{1}{f\left( t \right)dt} \\ 
   & -\min \left\{ 2\int\limits_{0}^{{1}/{2}\;}{{{p}^{2}}\left( t \right)dt}-{{\left( \int\limits_{0}^{1}{p\left( t \right)dt} \right)}^{2}},\frac{1}{2}\int\limits_{0}^{{1}/{2}\;}{{{\left( {f\left( t \right)+f\left( 1-t \right)} \right)}^{2}}dt}-{{\left(\frac{1}{2} \int\limits_{0}^{1}{({f\left( t \right)+f\left( 1-t \right)})dt} \right)}^{2}} \right\},
  \end{aligned}\]
which yields the desired inequality.  
\end{proof}

  We can improve the second inequality in Theorem \ref{01}, in the following way.
    \begin{theorem}
    Let $f,g:J\to \mathbb{R}$ be synchronous functions on the interval $\left[ 0,1 \right]$. Then
    \[\begin{aligned}
      & \int\limits_{0}^{1}{f\left( t \right)g\left( t \right)dt}-\int\limits_{0}^{1}{f\left( t \right)dt}\int\limits_{0}^{1}{g\left( t \right)dt} \\ 
     & \le \frac{1}{2}\left( \int\limits_{0}^{1}{{{f}^{2}}\left( t \right)dt}-{{\left( \int\limits_{0}^{1}{f\left( t \right)dt} \right)}^{2}}+\int\limits_{0}^{1}{{{g}^{2}}\left( t \right)dt}-{{\left( \int\limits_{0}^{1}{g\left( t \right)dt} \right)}^{2}} \right).
    \end{aligned}\]
    \end{theorem}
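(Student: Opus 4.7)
The plan is to adapt the two-variable integration technique from the proof of Theorem \ref{01}, but now bound the product $(f(t)-f(s))(g(t)-g(s))$ from above rather than from below. Because $f$ and $g$ are synchronous, the product is nonnegative, hence equals its absolute value:
\[(f(t)-f(s))(g(t)-g(s)) = |f(t)-f(s)|\,|g(t)-g(s)|.\]
I would then apply the elementary AM--GM inequality $uv\le \tfrac{1}{2}(u^2+v^2)$ with $u=|f(t)-f(s)|$ and $v=|g(t)-g(s)|$ to obtain the pointwise estimate
\[(f(t)-f(s))(g(t)-g(s)) \le \tfrac{1}{2}\bigl((f(t)-f(s))^2+(g(t)-g(s))^2\bigr).\]

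Next I would integrate both sides over the square $(s,t)\in[0,1]^2$ and invoke the standard Korkine-type expansions
\[\int_0^1\!\!\int_0^1 (f(t)-f(s))(g(t)-g(s))\,ds\,dt = 2\left(\int_0^1 f(t)g(t)\,dt - \int_0^1 f(t)\,dt\int_0^1 g(t)\,dt\right),\]
and
\[\int_0^1\!\!\int_0^1 (f(t)-f(s))^2\,ds\,dt = 2\left(\int_0^1 f^2(t)\,dt - \Bigl(\int_0^1 f(t)\,dt\Bigr)^2\right),\]
together with the analogous identity for $g$. Substituting these into the integrated inequality and dividing by $2$ yields exactly the claim.

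The argument is essentially elementary, with no real obstacle beyond bookkeeping. The only conceptually delicate point is that the synchronous hypothesis is used precisely once, namely to rewrite the signed product $(f(t)-f(s))(g(t)-g(s))$ as $|f(t)-f(s)|\,|g(t)-g(s)|$ before AM--GM is applied; without this, the chain of inequalities would break at that step. In this sense the argument mirrors the derivation of the $\max$ inequality in Theorem \ref{01}, but uses the bound $uv\le \tfrac{1}{2}(u^2+v^2)$ in place of the bound $uv\ge \min\{u^2,v^2\}$ employed there. This replaces the maximum of the two ``variances'' by their arithmetic mean, which constitutes the promised refinement since $\tfrac{1}{2}(a+b)\le \max\{a,b\}$ whenever $a,b\ge 0$.
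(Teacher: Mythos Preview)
Your proof is correct and matches the paper's approach exactly: the paper derives the same pointwise bound via synchronicity and the AM--GM inequality $uv\le\tfrac{1}{2}(u^2+v^2)$, then refers back to the double-integration technique of Theorem~\ref{01}, which you have spelled out explicitly via the Korkine identities. One minor remark: contrary to your comment, the synchronous hypothesis is not actually needed at that step, since $xy\le\tfrac{1}{2}(x^2+y^2)$ holds for all real $x,y$; the chain would not break without it, so the theorem in fact holds for arbitrary (square-integrable) $f,g$.
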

    \begin{proof}
    We have
    \[\begin{aligned}
      & f\left( t \right)g\left( t \right)+f\left( s \right)g\left( s \right)-\left( f\left( t \right)g\left( s \right)+f\left( s \right)g\left( t \right) \right) \\ 
     & =\left( f\left( t \right)-f\left( s \right) \right)\left( g\left( t \right)-g\left( s \right) \right) \\ 
     & =\left| \left( f\left( t \right)-f\left( s \right) \right)\left( g\left( t \right)-g\left( s \right) \right) \right| \\ 
     & =\left| f\left( t \right)-f\left( s \right) \right|\left| g\left( t \right)-g\left( s \right) \right| \\ 
     & \le \frac{1}{2}\left( {{\left( f\left( t \right)-f\left( s \right) \right)}^{2}}+{{\left( g\left( t \right)-g\left( s \right) \right)}^{2}} \right) \\ 
     & =\frac{1}{2}\left( {{f}^{2}}\left( t \right)+{{f}^{2}}\left( s \right)+{{g}^{2}}\left( t \right)+{{g}^{2}}\left( s \right)-2\left( g\left( t \right)g\left( s \right)+f\left( t \right)f\left( s \right) \right) \right). 
    \end{aligned}\]
    Therefore,
    \[\begin{aligned}
      & f\left( t \right)g\left( t \right)+f\left( s \right)g\left( s \right)-\left( f\left( t \right)g\left( s \right)+f\left( s \right)g\left( t \right) \right) \\ 
     & \le \frac{1}{2}\left( {{f}^{2}}\left( t \right)+{{f}^{2}}\left( s \right)+{{g}^{2}}\left( t \right)+{{g}^{2}}\left( s \right)-2\left( g\left( t \right)g\left( s \right)+f\left( t \right)f\left( s \right) \right) \right).  
    \end{aligned}\]
The remaining part of the proof is similar to the proof of Theorem \ref{01}, so we omit the details
    \end{proof}
  
  \section*{Acknowledgement}
  The authors would like to thank Prof. J. C. Bourin, who pointed out a crucial mistake in the previous version of the manuscript.

{\tiny $^{1}$Department of basic sciences, Princess Sumaya University for Technology, Amman 11941, Jordan}

{\tiny \textit{E-mail address:} sababheh@psut.edu.jo}

{\tiny $^{2}$Department of Mathematics, Islamic Azad University, Mashhad Branch, Mashhad, Iran}

{\tiny \textit{E-mail address:} shiva.sheybani95@gmail.com}

{\tiny $^{3}$Department of Mathematics, Payame Noor University (PNU), P.O. Box 19395-4697, Tehran, Iran}

{\tiny \textit{E-mail address:} hrmoradi@mshdiau.ac.ir}
\end{document}